\newtheorem{theorem}{Theorem}
\newtheorem{lemma}{Lemma}
\theoremstyle{definition}
\newtheorem{claim}{Claim}
\newtheorem{remark}[claim]{Remark}
\newtheorem{problem}{Problem}
\begin{document}

\title{\bf\Large Fan-type degree condition restricted to triples of
induced subgraphs ensuring Hamiltonicity}

\date{}

\author{Bo Ning\thanks{E-mail address: ningbo\_math84@mail.nwpu.edu.cn.}\\
\small Department of Applied Mathematics, School of Science,\\
\small Northwestern Polytechnical University,
\small Xi'an, Shaanxi 710072, P.R.~China\\}

\maketitle

\begin{abstract}
In 1984, Fan gave a sufficient condition involving maximum degree of
every pair of vertices at distance two for a graph to be
Hamiltonian. Motivated by Fan's result, we say that an induced
subgraph $H$ of a graph $G$ is $f$-heavy if for every pair of
vertices $u,v\in V(H)$, $d_{H}(u,v)=2$ implies that
$\max\{d(u),d(v)\}\geq n/2$. For a given graph $R$, $G$ is called
$R$-$f$-heavy if every induced subgraph of $G$ isomorphic to $R$ is
$f$-heavy. For a family $\mathcal{R}$ of graphs, $G$ is
$\mathcal{R}$-$f$-\emph{heavy} if $G$ is $R$-$f$-heavy for every
$R\in \mathcal{R}$. In this note we show that every 2-connected
graph $G$ has a Hamilton cycle if $G$ is
$\{K_{1,3},P_7,D\}$-$f$-heavy or $\{K_{1,3},P_7,H\}$-$f$-heavy,
where $D$ is the deer and $H$ is the hourglass. Our
result is a common generalization of previous theorems of Broersma
et al. and Fan on Hamiltonicity of 2-connected graphs.

\medskip
\noindent {\bf Keywords:} Combinatorial problem; Hamilton cycle; Fan-type degree condition; Induced subgraph; Claw
\smallskip

\noindent {\bf AMS Subject Classification (2000):} 05C38 05C45
\end{abstract}

\section{Introduction}
We use Bondy and Murty \cite{Bondy_Murty} for terminology and
notation not defined here and consider finite simple graphs only.

Let $G$ be a graph and $H$ be a subgraph of $H$. For two vertices $x,y\in
V(H)$, a shortest $(x,y)$-path in $H$ means that a path connecting $x$ and
$y$ with all vertices in $H$. The \emph{distance} between $x$ and $y$
in $H$, denoted by $d_{H}(x,y)$, is the length of a shortest $(x,y)$-path in $H$.
If $H=G$, we use $d(x,y)$ instead of $d_{G}(x,y)$.

A graph is called \emph{Hamiltonian} if it contains a Hamilton
cycle, i.e., a cycle passing through all its vertices. Checking
whether a given graph is Hamiltonian or not is a notorious
\emph{NP}-complete decision problem. Thus graphists drew their
attention to find sufficient conditions for the existence of
Hamilton cycles in graphs. The following sufficient condition for
the existence of Hamilton cycles in 2-connected graphs is well
known.

\begin{theorem}[Fan \cite{Fan}]\label{th1}
Let $G$ be a 2-connected graph on $n$ vertices. If
$\max\{d(x),d(y)\}\geq n/2$ for every pair of vertices $x$ and $y$
with $d(x,y)=2$, then $G$ is Hamiltonian.
\end{theorem}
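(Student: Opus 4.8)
The natural plan is a proof by contradiction built around a longest cycle. Assume $G$ is $2$-connected and satisfies the Fan condition but has no Hamilton cycle. Let $C$ be a longest cycle of $G$, fix an orientation, and for $v\in V(C)$ let $v^{+}$, $v^{-}$ denote the successor and predecessor of $v$ on $C$. Since $C$ is not a Hamilton cycle there is a component $H$ of $G-V(C)$, and $2$-connectivity guarantees that $H$ has at least two neighbours on $C$; write $X=N_{C}(H)=\{x_{1},\dots ,x_{t}\}$, $t\ge 2$, listed in cyclic order. The whole argument is geared towards producing a cycle strictly longer than $C$, which is the desired contradiction.

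The first block of work is a set of \emph{crossing lemmas} forced by the maximality of $C$. Since $H$ is connected and every $x_{i}$ has a neighbour in $H$, any two attachment vertices $x_{i},x_{j}$ are joined by a path whose interior lies in $H$; a longer cycle would result from various reroutings, and ruling these out gives: (a) no two consecutive vertices of $C$ lie in $X$, so each successor $x_{i}^{+}$ has no neighbour in $H$; and (b) $x_{i}^{+}x_{j}^{+}\notin E(G)$ whenever $i\ne j$, since otherwise the detour through $H$ from $x_{i}$ to $x_{j}$, together with the chord $x_{i}^{+}x_{j}^{+}$ and one reversed arc of $C$, reassembles into a strictly longer cycle. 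Thus $\{x_{1}^{+},\dots ,x_{t}^{+}\}$ is an independent set, none of whose vertices sees $H$.

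Next I would feed the degree hypothesis into this picture. Fixing $h\in H$ and an $x_{i}$ with $x_{i}h\in E(G)$, the vertex $x_{i}$ is a common neighbour of $h$ and of $x_{i}^{+}$, while $hx_{i}^{+}\notin E(G)$ by (a); hence $d(h,x_{i}^{+})=2$ and Fan's condition yields $\max\{d(h),d(x_{i}^{+})\}\ge n/2$. Carrying this out at two distinct attachment vertices gives either two non-adjacent successors $u=x_{i}^{+}$, $v=x_{j}^{+}$ each of degree at least $n/2$, or else a vertex of $H$ of degree at least $n/2$; either way we have located a vertex of very large degree in a graph whose longest cycle is short.

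Converting this abundance of edges into a longer cycle is where the real difficulty lies, and I expect it to be the main obstacle. The subtlety is precisely that lemma (b) blocks the cheapest rerouting: a neighbour $c$ of $u$ with $c^{-}\in X$ (equivalently $c\in X^{+}$) would at once yield a longer cycle, yet (b) forbids $u=x_{i}^{+}$ from having any neighbour in $X^{+}$, so one cannot finish with a single chord. Instead I would run a global degree count in the spirit of Ore's theorem: for the high-degree vertex $u$, each of its neighbours $c$ contributes a forbidden position (such as $c^{+}$ or $c^{-}$) that, by the crossing lemmas and the detours through $H$, can be neither a neighbour of the second high-degree vertex $v$ nor a vertex of $H$. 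Bounding the union of these forbidden positions by $|V(G)\setminus V(H)|<n$ should contradict $d(u)+d(v)\ge n$. Making this rigorous requires (i) choosing the whole configuration extremally — typically maximising $|C|$ and then minimising an arc length between consecutive attachment vertices — so that the crossing arguments have enough room, and (ii) a careful case analysis separating the subcase in which the large-degree vertex lies inside $H$, where one instead shows directly that $h$ then has so many attachment vertices that a single detour already lengthens $C$. The bookkeeping of cyclic positions throughout these crossings is the technical heart of the proof.
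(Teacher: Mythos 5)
This statement is Fan's theorem, which the paper only cites (Theorem~\ref{th1} is quoted from \cite{Fan}); there is no proof of it in the paper to compare against, so your attempt has to be judged on its own. Your setup is the standard and correct one: longest cycle $C$, component $H$ of $G-V(C)$, attachment set $X$ with $t\ge 2$ by $2$-connectivity, the crossing lemmas showing that $X^{+}$ is independent and anticomplete to $H$, and the observation that $d(h,x_i^{+})=2$ forces $\max\{d(h),d(x_i^{+})\}\ge n/2$. All of that is sound.

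The genuine gap is the endgame, and you have in effect flagged it yourself (``should contradict'', ``making this rigorous requires''). Two concrete problems remain unresolved. First, in the case of two heavy successors $u=x_i^{+}$, $v=x_j^{+}$ with $uv\notin E(G)$ and $d(u)+d(v)\ge n$, the natural Ore-type rotation is performed on the path $Q$ from $u$ to $v$ obtained by splicing the two arcs of $C$ with a detour through $H$; to conclude one needs $d_Q(u)+d_Q(v)\ge |V(Q)|$, but $u$ and $v$ may have many neighbours outside $Q$ (e.g.\ in other components of $G-V(C)$, which nothing in your crossing lemmas excludes), so the count does not close as stated. Controlling these stray neighbours is precisely where Fan's original argument does its heavy lifting, via a careful choice of the bridge and of the pair of attachment vertices, and that analysis is absent here. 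Second, the subcase where the heavy vertex lies in $H$ does not fall to ``a single detour already lengthens $C$'': if $h\in V(H)$ has $d(h)\ge n/2$, its neighbours split between $V(H)$ and $X$, and the resulting inequality $n\ge |C|+|V(H)|\ge 2|X_h|+(d(h)-|X_h|)+1$ only yields $|X_h|\le n/2-1$, which is not a contradiction; one must again combine the size of $H$ with a second heavy vertex or a finer segment-length argument. As it stands the proposal is a correct skeleton with the decisive counting argument missing.
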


There is another kind of sufficient conditions for Hamiltonicity of
graphs, called forbidden subgraph condition. Before we state some of
these results, we first introduce some terminology and notation.

Let $G$ be a graph and $H$ be a subgraph of $G$. If $H$ contains
every edge $xy\in E(G)$ with $x,y\in V(H)$, then $H$ is called an
\emph{induced subgraph} of $G$. For a given graph $R$, $G$ is
$R$-\emph{free} if $G$ contains no induced subgraph isomorphic to
$R$. For a family $\mathcal{R}$ of graphs, $G$ is
$\mathcal{R}$-\emph{free} if $G$ is $R$-free for every $R\in
\mathcal{R}$. The graph $K_{1,3}$ is called a \emph{claw}. Its only vertex with
degree 3 is called the \emph{center}, and other vertices are the
\emph{end vertices} of the claw. Throughout this note, instead of
$K_{1,3}$-free, we use the more common term \emph{claw-free}.

The following are two results on forbidden subgraph conditions for
Hamiltonicity of graphs.

\begin{theorem}[Broersma and Veldman \cite{Broersma_Veldman}]\label{th2}
Let $G$ be a 2-connected graph. If $G$ is claw-free and
$\{P_7,D\}$-free, then $G$ is Hamiltonian. (see Fig. 1)
\end{theorem}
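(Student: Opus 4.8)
The plan is to argue by contradiction using a longest cycle together with the structural rigidity forced by claw-freeness. Suppose $G$ is a $2$-connected, claw-free, $\{P_7,D\}$-free graph that is not Hamiltonian, and let $C$ be a longest cycle in $G$, written with a fixed orientation. Since $G$ is not Hamiltonian there is a vertex off $C$, and since $G$ is $2$-connected every component $F$ of $G-V(C)$ has at least two neighbours on $C$. Fixing such a component and a vertex $x\in V(F)$ adjacent to $C$, I would first record the two standard local facts: by maximality of $C$ no two neighbours of a vertex of $F$ are consecutive on $C$, and by claw-freeness the neighbourhood of every vertex contains no independent set of size $3$, so in particular the two cycle-neighbours flanking an attachment point cannot both be nonadjacent to an outside neighbour of that point.

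The heart of the argument is to convert these local constraints into a long induced path. Starting from an attachment vertex $u=v_i\in V(C)\cap N(x)$ and walking along $C$ through a gap (a maximal run of cycle vertices none of which is adjacent to $F$), claw-freeness forces the successor and predecessor of each attachment point to be adjacent, which lets me peel off an induced path that alternates between cycle segments and the outside vertex $x$. I would show that whenever a gap is long enough, or whenever two attachment points are far apart on $C$, the resulting path has at least seven vertices with no chords, producing an induced $P_7$; and that the one remaining configuration, in which the gaps are short and the attachments cluster, forces precisely the triangle-and-pendant-path pattern isomorphic to the deer $D$. Either way we contradict $\{P_7,D\}$-freeness, so no such $C$ can exist and $G$ must be Hamiltonian.

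An alternative, and perhaps cleaner, route is to pass to line graphs: since $G$ is claw-free, its Ryj\'a\v{c}ek closure $cl(G)$ satisfies $cl(G)=L(H)$ for a triangle-free graph $H$, and $G$ is Hamiltonian if and only if $cl(G)$ is, if and only if (by the Harary--Nash-Williams criterion) $H$ has a dominating closed trail. One would then translate $\{P_7,D\}$-freeness into a bound on the branch structure of $H$ and build the dominating closed trail directly. I expect the main obstacle in either route to be essentially the same: the exhaustive case analysis needed to separate the configurations that yield an induced $P_7$ from the single exceptional family that yields the deer, and---in the closure route---the additional check that local completion does not introduce a new induced $P_7$ or deer that would invalidate the structural conclusions drawn about $H$.
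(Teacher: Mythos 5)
Your overall strategy---assume non-Hamiltonicity, take a longest cycle, and use claw-freeness to extract an induced $P_7$ or a deer---is the right one, and it is essentially how the paper handles this statement (Theorem~\ref{th2} follows from Theorem~\ref{th5}, whose proof runs exactly this argument in the more general $f$-heavy setting). But as written your proposal defers the entire substance of the proof: the sentences beginning ``I would show that whenever a gap is long enough\dots'' and ``the one remaining configuration\dots forces precisely the triangle-and-pendant-path pattern'' \emph{are} the theorem, and nothing in the sketch establishes either. Concretely, what is missing is the chain of non-adjacency lemmas that make a candidate vertex set actually induce a $P_7$ or a $D$. In the paper's argument one takes a shortest path $P=w_0w_1\ldots w_rw_{r+1}$ with $w_0=u$, $w_{r+1}=v$ on $C$ and interior off $C$, lets $y_1$ be the first non-neighbour of $u$ on $\overrightarrow{C}[u,v]$ and $y_2$ the first non-neighbour of $v$ on $\overrightarrow{C}[v,u]$, and then proves---each time by exhibiting a longer cycle---that no $w_i$ is adjacent to anything in $\overrightarrow{C}[u^+,y_1]\cup\overrightarrow{C}[v^+,y_2]$, that $u$ has no neighbour in $\overrightarrow{C}[v^+,y_2]$ and symmetrically for $v$, and that there is no chord between $\overrightarrow{C}[u^+,y_1]$ and $\overrightarrow{C}[v^+,y_2]$. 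Only after all of that does $\{y_1,y_1^-,u,w_1,\dots,w_r,v,y_2^-,y_2\}$ induce a $P_{6+r}$ (when $uv\notin E(G)$) or, after further showing $uv\in E(G)$ and $r=1$, a deer. Your ``gap'' framing does not substitute for this: a long run of cycle vertices with no neighbour in the outside component need not induce a path, since chords of $C$ inside the gap are uncontrolled, and the induced $P_7$ one actually finds is not contained in a single gap but straddles the outside path and two cycle segments on opposite sides of $u$ and $v$.

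The closure route you mention is not a repair either: besides being anachronistic for this 1990 theorem, $\{P_7,D\}$-freeness is not obviously preserved by local completion, so the ``translation'' of the hypothesis into a branch-structure condition on the preimage graph $H$ is itself a nontrivial claim that you flag but do not address. In short, the skeleton matches the paper's proof, but the load-bearing lemmas are absent.
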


\begin{theorem}[Faudree, Ryj\'{a}\v{c}ek and Schiermeyer \cite{Faudree_Ryjacek_Schiermeyer}]\label{th3}
Let $G$ be a 2-connected graph. If $G$ is claw-free and
$\{P_7,H\}$-free, then $G$ is Hamiltonian. (see Fig. 1)
\end{theorem}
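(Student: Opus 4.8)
My plan is to reduce the problem to line graphs via Ryj\'a\v{c}ek's closure. For a claw-free graph $G$, the closure $\mathrm{cl}(G)$ is well defined, equals the line graph $L(H_0)$ of a (unique, up to the obvious ambiguity) triangle-free graph $H_0$, and satisfies that $G$ is Hamiltonian if and only if $\mathrm{cl}(G)$ is. Since $G$ is claw-free, it therefore suffices to show that $L(H_0)$ is Hamiltonian. By the Harary--Nash-Williams theorem, $L(H_0)$ is Hamiltonian precisely when $H_0$ possesses a \emph{dominating closed trail}, that is, a closed trail $T$ such that every edge of $H_0$ has at least one endpoint on $T$. Thus the entire theorem collapses to one goal: construct a dominating closed trail in $H_0$.

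The hypotheses must then be translated onto $H_0$. Two-connectivity of $G$ makes $H_0$ essentially $2$-edge-connected, so closed trails are abundant. Because $H_0$ is triangle-free, every triangle of $L(H_0)$ arises from three edges sharing a common vertex, i.e. from a star at a vertex of $H_0$; consequently an induced hourglass in $L(H_0)$ corresponds to an edge $pq$ of $H_0$ both of whose ends have degree at least three, with the remaining incident edges suitably distinct and pairwise non-adjacent. Forbidding the hourglass therefore keeps the branch vertices (degree $\geq 3$) of $H_0$ apart: essentially no edge joins two of them. Likewise an induced $P_7$ in $L(H_0)$ corresponds to an induced path on eight vertices of $H_0$, so forbidding $P_7$ bounds the length of induced paths in $H_0$.

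Given these two metric constraints on $H_0$ --- well-separated branch vertices and short induced paths --- I would build the dominating closed trail directly. Take a closed trail $T$ in $H_0$ maximal with respect to the set of edges it dominates. If some edge $f$ were undominated by $T$, I would trace outward from $f$: since branch vertices are non-adjacent, the pieces hanging off $T$ are essentially paths, and the absence of long induced paths forces such a piece to meet $T$ again quickly. Rerouting $T$ through this piece either enlarges the dominated set, contradicting maximality, or else exhibits a configuration containing a forbidden $P_7$ or hourglass. Ruling out each case yields that $T$ dominates every edge, whence $L(H_0)$, and so $G$, is Hamiltonian.

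The principal difficulty is the passage to the closure. While hourglass-freeness is stable under Ryj\'a\v{c}ek's closure, $P_7$-freeness need not be, since a single completing edge can turn a disjoint union $P_4\cup P_3$ into an induced $P_7$. I would handle this either by invoking the stability of the joint class $\{K_{1,3},P_7\}$ under closure, or by running the maximal-trail argument directly in $G$ --- using the claw to force the cycle-neighbours of each attachment of a longest cycle to be adjacent --- so as to bypass the closure entirely. The second, and I expect heavier, difficulty is the combinatorial core of the third paragraph: verifying that separated branch vertices together with bounded induced paths genuinely force a dominating closed trail. That case analysis on how the undominated pieces attach to the trail is where the real content of the theorem resides.
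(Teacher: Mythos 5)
The paper does not actually prove this statement: Theorem~\ref{th3} is quoted from Faudree, Ryj\'a\v{c}ek and Schiermeyer, and the only argument in the paper that covers it is the proof of Theorem~\ref{th5}, of which Theorem~\ref{th3} is the special case in which the relevant induced subgraphs are simply absent. That proof is a direct longest-cycle argument: take a longest (heavy) cycle $C$, a shortest path $P$ internally disjoint from $C$ joining two of its vertices, and use the forbidden claw, $P_7$ and hourglass to contradict the maximality of $C$ (Claims~1--7 and the closing paragraph). Your route through Ryj\'a\v{c}ek's closure and the Harary--Nash-Williams reduction to a dominating closed trail in the triangle-free preimage $H_0$ is therefore genuinely different, and your translation of the hypotheses onto $H_0$ (no edge of $H_0$ joins two vertices of degree at least three; $H_0$ contains no path on eight vertices, induced or not) is essentially the right dictionary.

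As written, however, the proposal has two real gaps. First, the reduction is not secured: you yourself concede that $P_7$-freeness may not survive the closure, and the asserted stability of hourglass-freeness is also unjustified, since completing a neighbourhood to a clique can create new pairs of triangles meeting in a single vertex. Your fallback of ``running the maximal-trail argument directly in $G$'' abandons the line-graph framework on which your entire third paragraph depends, so it is not a repair but a different, ungiven proof. Second, and more seriously, the combinatorial core is deferred wholesale: the assertion that separated branch vertices together with the absence of $P_8$ in $H_0$ force a maximal closed trail to be dominating is precisely where the theorem lives, and you supply no case analysis of how an undominated edge attaches to the trail, no verification that the proposed rerouting preserves the closed-trail property (the parity of degrees along $T$), and no derivation of a forbidden configuration when rerouting fails. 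Until both points are filled in, this is a plausible plan rather than a proof; the paper's own specialisation of the Theorem~\ref{th5} argument is, by contrast, elementary and self-contained.
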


\begin{center}
\begin{picture}(160,120)
\thicklines

\put(0,0){ \put(45,30){\circle*{6}} \put(45,30){\line(-1,1){25}}
\put(45,30){\line(1,1){25}} \put(20,55){\line(1,0){50}}
\multiput(20,55)(50,0){2}{\multiput(0,0)(0,30){3}{\put(0,0){\circle*{6}}}
\put(0,0){\line(0,1){60}}} \put(25,10){$D$ (Deer)}}

\put(90,0){\multiput(20,35)(0,75){2}{\multiput(0,0)(50,0){2}{\put(0,0){\circle*{6}}}
\put(0,0){\line(1,0){50}}} \put(45,72.5){\circle*{6}}
\put(20,35){\line(2,3){50}} \put(70,35){\line(-2,3){50}}
\put(10,10){$H$ (Hourglass)}}

\end{picture}

\small Fig. 1. Graphs $D$ and $H$.
\end{center}

Let $G$ be a graph on $n$ vertices. A vertex $v$ of $G$ is called
\emph{heavy} if $d(v)\geq n/2$.  Following
\cite{Broersma_Ryjacek_Schiermeyer}, an induced claw of $G$ is
called \emph{2-heavy} if at least two of its end vertices are heavy.
The graph $G$ is 2-\emph{heavy} if all induced claw of $G$ are
2-heavy. Thus 2-heavy graphs can be seen as graphs by restricting
Fan's condition to every induced claw.

Broersma et al. \cite{Broersma_Ryjacek_Schiermeyer} extended
Theorems \ref{th2} and \ref{th3} to a larger class of 2-heavy
graphs.

\begin{theorem}[Broersma, Ryj\'{a}\v{c}ek and Schiermeyer \cite{Broersma_Ryjacek_Schiermeyer}]\label{th4}
Let $G$ be a 2-connected graph. If $G$ is 2-heavy, and moreover,
$\{P_7,D\}$-free or $\{P_7,H\}$-free, then $G$ is Hamiltonian.
\end{theorem}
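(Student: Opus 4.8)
The plan is to argue by contradiction, exploiting a longest cycle. Suppose $G$ satisfies the hypotheses but is not Hamiltonian, and let $C$ be a longest cycle of $G$; since $G$ is $2$-connected and $C$ is not a Hamilton cycle, some component $F$ of $G-V(C)$ has at least two neighbours on $C$. The objective is to produce a cycle longer than $C$, contradicting its maximality. The $2$-heavy hypothesis will be used to turn combinatorial obstructions into degree information: whenever the configuration forces an induced claw centred on $C$, two of its end-vertices must be heavy, and the resulting large degrees give the leverage (in the spirit of Theorem~\ref{th1}) needed to reroute $C$. The forbidden subgraphs play the complementary role of pinning down the local structure near $F$: $P_7$-freeness forbids long induced paths, while $D$-freeness (resp.\ $H$-freeness) excludes specific small configurations, so that only finitely many attachment patterns survive.

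Concretely, I would orient $C$ and write $x^{+},x^{-}$ for the successor and predecessor of $x\in V(C)$, and let $v_1,\dots,v_t$ ($t\ge 2$) be the neighbours of $F$ on $C$ in cyclic order. First I would record the standard maximality facts: for $w\in V(F)$ and a neighbour $u$ of $w$ on $C$, the edge $uu^{+}$ can be rerouted through $w$, so $wu^{+}\notin E(G)$; the successors on $C$ of $w$'s neighbours form an independent set; and a path through $F$ joining two attachments cannot be longer than either of the two arcs of $C$ between its endpoints. These non-adjacencies are exactly what is needed to exhibit induced claws and induced paths in the next step.

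Next, fixing an attachment $v_i$ with a neighbour $w\in V(F)$, I would examine the triple $\{v_i^{-},w,v_i^{+}\}$ around the centre $v_i$. After absorbing the possible chords $v_i^{-}v_i^{+}$ and $v_i^{-}w$ into a shorter arc or a longer cycle, this triple is an induced claw, so $2$-heaviness makes two of $v_i^{-},w,v_i^{+}$ heavy. Propagating this along the arc $v_i\overrightarrow{C}v_{i+1}$ and into $F$, $P_7$-freeness forces chords between the arc and $F$, and the absence of $D$ (resp.\ $H$) forbids these chords from arranging themselves into a deer (resp.\ an hourglass). This reduces the situation to a short list of local patterns; in each, I would combine a located heavy vertex with the inequality $\max\{d(a),d(b)\}\ge n/2$ for the distance-two pair witnessed by the claw to guarantee a chord across which $C$ can be rerouted through $F$, yielding the desired longer cycle.

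The main obstacle is the case analysis that must track three things at once: which end-vertices of each forced claw are heavy, the chord pattern imposed by $P_7$-freeness, and the additional exclusion coming from $D$-freeness versus $H$-freeness. I expect the $H$-free branch to be the more delicate one, since the hourglass is small and its absence restricts triangles sharing the attachment rather than long pendant legs, so the bookkeeping is tighter. For this reason I would first attempt a cleaner closure-based reduction: define a local-completion (heavy-closure) operation that adds edges at heavy or locally connected vertices, prove it preserves Hamiltonicity together with the $2$-heavy and $\{P_7,D\}$-free (resp.\ $\{P_7,H\}$-free) hypotheses, and thereby reduce directly to the claw-free Theorems~\ref{th2} and \ref{th3}; there the obstacle migrates to verifying that the closure creates no new induced $P_7$, $D$, or $H$.
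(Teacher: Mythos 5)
The paper does not prove Theorem~\ref{th4} directly (it is quoted from \cite{Broersma_Ryjacek_Schiermeyer}); it obtains it as a special case of Theorem~\ref{th5}, whose proof is the natural baseline for comparison. Measured against that proof, your plan has two genuine gaps. First, you start from a longest cycle $C$ of $G$, and this choice breaks the key step of your own outline. When you exhibit the induced claw with centre $v_i$ and end vertices $v_i^-,w,v_i^+$ (with $w$ off the cycle), the $2$-heavy hypothesis only tells you that \emph{some} two of the three end vertices are heavy; it may well be $w$ together with just one of $v_i^{\pm}$, and then you get no degree information linking $v_i^-$ and $v_i^+$, which is what every subsequent rerouting needs. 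The paper avoids this by invoking Lemma~\ref{le1} (Bollob\'as--Brightwell, Shi) and taking $C$ to be a longest \emph{heavy} cycle, i.e.\ one containing all vertices of degree at least $n/2$. Then every vertex of the connecting path $P$ outside $C$ is automatically light, so in each forced claw the two on-cycle end vertices are the heavy ones, giving $d(v_i^-)+d(v_i^+)\ge n$ (Claim~\ref{cl2}). Without the heavy cycle (or some substitute handling a heavy vertex off $C$), your case analysis stalls at the very first claw.

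Second, even once two specific vertices $a,b$ are known to be heavy, you write as if this ``guarantees a chord across which $C$ can be rerouted.'' It does not: $d(a)+d(b)\ge n$ with $ab\notin E(G)$ is only an Ore-type pseudo-edge. The paper converts such pseudo-edges into actual longer cycles via the Ore-cycle closure (Lemma~\ref{le2} of Li, Ryj\'a\v{c}ek, Wang and Zhang): any closed walk using edges of $\widetilde{E}(G)$ through all of $V(C)$ yields a genuine cycle through $V(C)$, hence a longer heavy cycle and a contradiction. Your proposal contains no device of this kind (a classical Ore/Bondy crossing-chord count would also do, but it must be stated and used), so the endgame of each of your cases is not actually closed. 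Your fallback suggestion of a closure reduction to Theorems~\ref{th2} and~\ref{th3} is a reasonable instinct, but, as you note yourself, it hinges on the stability of $\{P_7,D\}$-freeness and $\{P_7,H\}$-freeness under the completion operation, which you neither define nor verify; as written it is a research direction, not a proof. The remaining skeleton of your argument (minimal connecting path, non-adjacency of successors, induced $P_7$/$D$/$H$ forcing) does match Claims~\ref{cl1}--\ref{cl4} and the final paragraph of the paper's proof, so repairing the two gaps above would bring your argument essentially in line with the paper's.
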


Let $G$ be a graph and $H$ be an induced subgraph. We say that $H$
is $f$-\emph{heavy} if for every pair of vertices $u,v\in V(H)$,
$d_{H}(u,v)=2$ implies that $max\{d(u),d(v)\}\geq n/2$. For a given
graph $R$, $G$ is called $R$-$f$-\emph{heavy} if every induced
subgraph of $G$ isomorphic to $R$ is $f$-heavy. For a family
$\mathcal{R}$ of graphs, $G$ is $\mathcal{R}$-$f$-\emph{heavy} if
$G$ is $R$-$f$-heavy for every $R\in \mathcal{R}$. Note that every
$R$-free graph is also $R$-$f$-heavy, and that a graph is 2-heavy is
equivalent to that it is claw-$f$-heavy.

By relaxing forbidden subgraph conditions to conditions in which the
subgraphs are allowed, but where Fan-type degree condition is
imposed on these subgraphs if they appear, we extend Theorem
\ref{th4} as follows.

\begin{theorem}\label{th5}
Let $G$ be a 2-connected graph. If $G$ is
$\{K_{1,3},P_7,D\}$-$f$-heavy or $\{K_{1,3},P_7,H\}$-$f$-heavy, then
$G$ is Hamiltonian.
\end{theorem}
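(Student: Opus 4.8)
The plan is to extend the forbidden-subgraph result of Broersma, Ryjáček and Schiermeyer (Theorem~\ref{th4}) to the $f$-heavy setting, and the natural strategy is a proof by contradiction that reduces the $f$-heavy hypothesis to the 2-heavy hypothesis on the relevant subgraphs. Suppose $G$ is a 2-connected graph satisfying one of the two $f$-heavy conditions, say $\{K_{1,3},P_7,D\}$-$f$-heavy, but $G$ is not Hamiltonian. The first step is to record what the hypotheses say locally. Since $G$ is $K_{1,3}$-$f$-heavy (equivalently 2-heavy), every induced claw already has at least two heavy end vertices, because the three end vertices of a claw are pairwise at distance $2$ inside the claw. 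The crux is therefore to understand how the $P_7$-$f$-heavy and $D$-$f$-heavy (or $H$-$f$-heavy) conditions interact with the claw condition.

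\medskip
\noindent
The key reduction I would carry out is to show that a $2$-heavy graph which is $\{P_7,D\}$-$f$-heavy behaves, for the purpose of the closure/contradiction argument, like a graph that is $2$-heavy and $\{P_7,D\}$-\emph{free}. Concretely, I would analyze an induced $P_7$ or an induced deer $D$ and locate within it a pair of vertices at distance exactly $2$ (such pairs certainly exist: a $P_7$ has consecutive vertices $v_1v_2\cdots v_7$ with $d(v_1,v_3)=2$, and likewise $D$ and $H$ contain distance-two pairs). The $f$-heavy condition forces one endpoint of each such pair to be heavy. Combining several such forced heavy vertices, together with the heavy end vertices coming from induced claws that the $P_7$ or $D$ necessarily contains, I would argue that enough vertices of any such configuration are heavy. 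The goal is a local lemma of the form: if $G$ is $\{K_{1,3},P_7,D\}$-$f$-heavy and contains a non-trivial induced $P_7$ or $D$, then the heavy vertices are so plentiful that one can either find a Hamilton cycle directly or invoke Fan's Theorem~\ref{th1} on a suitable structure.

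\medskip
\noindent
For the machinery I expect to rely on a closure-type or heavy-path argument. One clean route is to use the heavy/claw-heavy closure of Broersma--Ryjáček--Schiermeyer: under the $2$-heavy hypothesis one may add edges between certain heavy vertices without changing Hamiltonicity, passing to a closure $G^{c}$ in which the remaining induced claws, $P_7$'s and deers are controlled. I would verify that the $f$-heavy conditions are inherited (or strong enough) under this closure so that $G^{c}$ becomes $2$-heavy and $\{P_7,D\}$-free, at which point Theorem~\ref{th4} applies to give a Hamilton cycle in $G^{c}$, hence in $G$. The same template handles the hourglass case by substituting $H$ for $D$ throughout and using the $H$-$f$-heavy condition to supply the heavy vertices in each induced hourglass.

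\medskip
\noindent
The main obstacle, I anticipate, is the case analysis needed to confirm that the distance-two pairs \emph{inside} an induced $P_7$, $D$, or $H$ really yield heavy vertices that are useful \emph{globally}---that is, that the local $f$-heavy information survives the closure and lines up with the vertices whose degrees Theorem~\ref{th4} wants to be large. In the forbidden-subgraph theorems these subgraphs simply do not occur, so there is nothing to check; here they may occur, and one must show the imposed degree conditions compensate exactly for their presence. Pinning down precisely which vertices of each forbidden configuration are forced heavy, and showing that this is enough to either forbid the configuration in the closure or to build the cycle, is where the careful work lies.
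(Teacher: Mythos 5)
Your proposal is an outline rather than a proof, and its central step has a genuine gap. You propose to reduce the $f$-heavy hypothesis to the forbidden-subgraph setting of Theorem~\ref{th4} by passing to a closure $G^{c}$ that is $2$-heavy and $\{P_7,D\}$-free. But the $f$-heavy condition only guarantees $\max\{d(u),d(v)\}\geq n/2$ for a distance-two pair inside the induced subgraph, i.e.\ \emph{one} heavy vertex per pair; the Bondy--Chv\'atal-type operation you invoke requires $d(u)+d(v)\geq n$, i.e.\ \emph{both} vertices heavy. An induced $P_7=v_1v_2\cdots v_7$ can satisfy the $P_7$-$f$-heavy condition with only $v_3,v_4,v_5$ heavy; the only new closure edge available is then $v_3v_5$, which need not destroy all induced copies of $P_7$ and may create new induced claws, paths, deers or hourglasses whose $f$-heaviness you have no way to certify (degrees and induced structure both change in the closure). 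You acknowledge that ``pinning down precisely which vertices are forced heavy'' is where the work lies, but that work is precisely what is missing, and there is no known closure under which an $\{K_{1,3},P_7,D\}$-$f$-heavy graph becomes $2$-heavy and $\{P_7,D\}$-free. The only fully correct observation in the proposal is that $K_{1,3}$-$f$-heavy is equivalent to $2$-heavy.

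The paper takes a different and more direct route that you should compare against: by the Bollob\'as--Brightwell/Shi lemma a $2$-connected graph contains a heavy cycle; one fixes a longest heavy cycle $C$, assumes non-Hamiltonicity, and attaches a shortest path $P$ internally disjoint from $C$ joining $u,v\in V(C)$. A sequence of claims shows that various pairs of vertices are non-adjacent even in the relaxed sense of Ore-edges ($xy\in E(G)$ or $d(x)+d(y)\geq n$), because otherwise the o-cycle lemma of Li--Ryj\'a\v{c}ek--Wang--Zhang would produce a longer heavy cycle. These non-adjacencies force specific induced claws, an induced $P_{6+r}$, and finally an induced deer or hourglass built from $P$ and the neighbours of $u,v$ on $C$; the $f$-heavy hypotheses applied to these concrete subgraphs force two particular vertices to be heavy, yielding an Ore-edge that was already excluded --- a contradiction. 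Since your reduction step is unsubstantiated and likely unworkable as stated, the proposal does not constitute a proof.
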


\begin{remark}
It is easily seen that every graph satisfying the condition of
Theorem \ref{th1} or \ref{th4} also satisfies the one of Theorem
\ref{th5}. Furthermore, The graph provided in the following is a
Hamiltonian graph satisfying the condition of Theorem \ref{th5}, but
not the one of Theorem \ref{th1} or \ref{th4}.

Let $n\geq 16$ be an even integer and $K_{n/2}+K_{n/2-7}$ denotes
the union of two complete graphs $K_{n/2}$ and $K_{n/2-7}$. We
construct the graph $G$ with $V(G)=V(K_{n/2}+K_{n/2-7})\cup
\{x,y,z,u,v,w,t\}$ and $E(G)=E(K_{n/2}+K_{n/2-7})\cup
\{xy,xz,yz,yw,wu,zt,tv\}\cup \{xx',yx',zx': x'\in V(K_{n/2})\}\cup
\{uy',vy': y'\in V(K_{n/2-7})\}$.

This fact shows that Theorem \ref{th5} indeed strength Theorems
\ref{th1} and \ref{th4}.
\end{remark}

In the next section, we will give the proof of Theorem \ref{th5}. Some concluding remarks will be given in Section 3.

\section{Proof of Theorem 5}
Before giving the proof of Theorem \ref{th5}, we introduce some additional
terminology, and will list two useful lemmas.

Let $G$ be a graph and $C$ be a cycle of $G$. We denote by
$\overrightarrow{C}$ the cycle with a given orientation, and by
$\overleftarrow{C}$ the same subgraph with the reverse orientation.
For two vertices $x,y\in V(C)$, $\overrightarrow{C}[x,y]$ is denoted
by the consecutive vertices from $x$ to $y$ in $C$ by the direction
specified by $\overrightarrow{C}$, and $\overleftarrow{C}[y,x]$ is
the same vertices with the reverse order. For a vertex $x\in V(C)$,
$x^{+}$ denotes the \emph{successor} of $x$ on $\overrightarrow{C}$,
and $x^{-}$ denotes its \emph{predecessor}. Similarly, for a path
$P$ and $x,y\in V(P)$, $P[x,y]$ denotes the subpath of $P$ from $x$
to $y$.

Let $G$ be a graph on $n$ vertices.  Recall that a vertex of a graph
$G$ is \emph{heavy} if its degree is at least $n/2$. Otherwise, it
is \emph{light}. A cycle $C$ of $G$ is called a \emph{heavy cycle}
if it contains all the heavy vertices of $G$.

\begin{lemma}[Bollob\'{a}s and Brightwell \cite{Bollobas_Brightwell}, Shi \cite{Shi}]\label{le1}
Let $G$ be a 2-connected graph. Then $G$ contains a heavy cycle.
\end{lemma}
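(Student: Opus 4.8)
The plan is to argue by a standard extremal-cycle method. Let $W$ denote the set of heavy vertices of $G$. Since $G$ is 2-connected it certainly has a cycle, and every vertex of a 2-connected graph lies on some cycle, so the cases $|W|\le 1$ are immediate; I would therefore assume $|W|\ge 2$. Among all cycles of $G$ I would choose one, $C$, that contains the maximum number of vertices of $W$, and, subject to this, whose length $|V(C)|$ is as large as possible. It then suffices to show $W\subseteq V(C)$. Suppose not, and fix a heavy vertex $w\in W\setminus V(C)$; the aim is to derive a contradiction.

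Next I would extract the local structure around $w$ using 2-connectivity. Let $H$ be the component of $G-V(C)$ that contains $w$. By Menger's theorem (the fan lemma) there are two internally disjoint paths from $w$ to $C$ whose only vertices on $C$ are their endpoints $a$ and $b$; concatenating them yields a path $Q$ from $a$ to $b$ that passes through $w$ and whose interior lies in $H$. The two vertices $a,b$ split $C$ into two arcs, and replacing either arc by $Q$ produces a new cycle through $w$. I would then observe that each such cycle is obtained from $C$ by gaining the heavy vertex $w$ while losing only the interior vertices of the replaced arc; hence, by the maximality of the heavy count of $C$, the interior of each of the two arcs must already contain a heavy vertex.

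To finish I would run a Bondy-type crossing argument on the attachment vertices of $H$. Writing $a_1,\dots,a_s$ (with $s\ge 2$) for the vertices of $C$ having a neighbour in $H$, listed in the cyclic order of $\overrightarrow{C}$, I would compare the successors $a_i^{+}$ and use the extremal choice of $C$ to show that rerouting through $H$ and through $w$ can create neither a longer cycle with the same heavy count nor a cycle with a larger heavy count. This forces strong non-adjacency relations among the $a_i^{+}$ and between the $a_i^{+}$ and $w$. Feeding these relations into a degree count and invoking $d(w)\ge n/2$ should then exhibit more than $n$ vertices, the desired contradiction, so that $W\subseteq V(C)$ and $C$ is the required heavy cycle.

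The hardest part will be the rerouting and insertion bookkeeping: I must guarantee that $w$ can be absorbed into a cycle without dropping any other heavy vertex, and this is subtle precisely because $C$ is only longest \emph{among the heaviest} cycles rather than globally longest, so the usual longest-cycle independence facts for the successor set cannot simply be quoted but have to be re-derived relative to the heavy-vertex objective. Balancing the two extremal criteria (heavy count first, length second) inside the crossing argument, and then converting the resulting non-adjacencies into a clean counting contradiction through $d(w)\ge n/2$, is where the real work lies.
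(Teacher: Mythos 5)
This lemma is not proved in the paper at all; it is imported from Bollob\'as--Brightwell and Shi, so there is no in-paper argument to compare yours against. Judged on its own, your proposal has a genuine gap at exactly the point you flag as ``where the real work lies.'' The preparatory steps are fine: choosing $C$ to maximize the number of heavy vertices and then its length, locating $w\in W\setminus V(C)$ in a component $H$ of $G-V(C)$, and deducing that each of the two arcs cut off by the fan from $w$ must contain a heavy vertex in its interior are all correct. But the announced finish --- ``feeding these relations into a degree count and invoking $d(w)\ge n/2$ should then exhibit more than $n$ vertices'' --- does not go through from what you have established. Carrying out the crossing argument honestly, one gets: the successors $a^{+}$ of the attachment vertices form an independent set $S$ disjoint from $N(w)\cup V(H)$ with no neighbours in $H$, whence $d(a^{+})\le n-|V(H)|-|S|\le n/2-1$ for every $a^{+}\in S$. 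That is, every such successor is \emph{light} --- a perfectly consistent conclusion, not a contradiction. With only the single inequality $d(w)\ge n/2$ in hand there is no second degree to sum against, so no count of the form ``more than $n$ vertices'' materializes; an Ore-type contradiction needs a \emph{pair} of nonadjacent vertices whose degrees sum to at least $n$. Closing the argument requires a further idea, e.g.\ pairing up the heavy vertices forced into the two arcs and running an insertion/rerouting argument with them, which is essentially the (nontrivial, roughly two-page) content of Shi's original proof.

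It is worth noting that the paper's own toolkit yields a much shorter route that you could have taken instead. Any two heavy vertices $u,v$ satisfy $d(u)+d(v)\ge n$, i.e.\ $uv\in\widetilde{E}(G)$; hence if there are at least three heavy vertices, \emph{any} cyclic ordering of them is an $o$-cycle, and Lemma~\ref{le2} immediately produces a genuine cycle containing all of them. The cases of at most two heavy vertices are handled exactly as in your first paragraph (in a $2$-connected graph any two vertices lie on a common cycle). If you want to keep your extremal-cycle strategy, you must actually supply the missing insertion bookkeeping rather than assert that a degree count ``should'' finish it.
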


Next we introduce a new concept proposed in
\cite{Li_Ryjacek_Wang_Zhang} recently. In fact, it is a refinement of
the closure theory of Bondy-Chv\'atal. For the sake of convenience,
we rewrite it here. We use $\widetilde{E}(G)$ to
denote the set $\{xy:xy\in E(G)~or~d(x)+d(y)\geq n, x,y\in V(G)\}$.
Let $k\geq 3$ be an integer. A sequence of vertices $C=v_1v_2\ldots
v_kv_1$ is called an \emph{Ore-cycle} or briefly, \emph{o-cycle} of
$G$, if for every $i\in \{1,\cdots,k\}$, there holds $v_iv_{i+1}\in
\widetilde{E}(G)$, where the indices are taken modulo $k$.

\begin{lemma}[Li, Ryj\'{a}\v{c}ek, Wang and Zhang \cite{Li_Ryjacek_Wang_Zhang}]\label{le2}
Let $G$ be a graph and $C$ be an $o$-cycle of $G$. Then there exists a cycle $C'$ of $G$ such that $V(C)\subseteq V(C')$.
\end{lemma}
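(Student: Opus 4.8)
The plan is to prove Lemma \ref{le2} by induction on the number of \emph{virtual} edges of the $o$-cycle, i.e.\ those consecutive pairs $v_iv_{i+1}\in\widetilde{E}(G)$ with $v_iv_{i+1}\notin E(G)$ (so that $d(v_i)+d(v_{i+1})\ge n$), reducing each such edge in turn to a genuine edge of an auxiliary graph. The entire argument rests on one crossing lemma for paths, which I would isolate first.

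\medskip
\noindent\textbf{Claim.} If $P=u_1u_2\cdots u_\ell$ is a path of $G$ with $u_1u_\ell\notin E(G)$ and $d(u_1)+d(u_\ell)\ge n$, then $G$ contains a cycle $C'$ with $V(P)\subseteq V(C')$. To prove it, I would set $A=\{i:u_1u_{i+1}\in E(G)\}$ and $B=\{i:u_iu_\ell\in E(G)\}$, both regarded as subsets of $\{1,\dots,\ell-1\}$; since $u_1u_\ell\notin E(G)$ (and there are no loops) this gives $|A|=|N(u_1)\cap V(P)|$ and $|B|=|N(u_\ell)\cap V(P)|$. If some $i\in A\cap B$, then $u_1u_2\cdots u_iu_\ell u_{\ell-1}\cdots u_{i+1}u_1$ is a cycle running through every vertex of $P$, using the real edges $u_iu_\ell$ and $u_{i+1}u_1$ together with the path edges. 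Otherwise $A\cap B=\emptyset$, so $|N(u_1)\cap V(P)|+|N(u_\ell)\cap V(P)|=|A|+|B|\le\ell-1$; hence the numbers of neighbours of $u_1$ and of $u_\ell$ lying \emph{off} $P$ sum to at least $(d(u_1)+d(u_\ell))-(\ell-1)\ge n-\ell+1$, which strictly exceeds $|V(G)\setminus V(P)|=n-\ell$. By pigeonhole $u_1$ and $u_\ell$ share a common neighbour $w\notin V(P)$, and then $u_1u_2\cdots u_\ell wu_1$ is a cycle through $V(P)\cup\{w\}$. Either way the Claim holds.

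\medskip
With the Claim available, I would carry out the induction on the number $m$ of virtual edges of $C=v_1\cdots v_kv_1$. If $m=0$ then every $v_iv_{i+1}\in E(G)$, so $C$ is already a cycle of $G$ and $C'=C$ works. If $m\ge1$, relabel so that $v_kv_1$ is virtual, i.e.\ $v_kv_1\notin E(G)$ and $d(v_k)+d(v_1)\ge n$, and put $G'=G+v_kv_1$. Adding an edge only raises degrees, so $\widetilde{E}(G)\subseteq\widetilde{E}(G')$ and $C$ remains an $o$-cycle of $G'$; but now $v_kv_1\in E(G')$, so $C$ has exactly $m-1$ virtual edges in $G'$. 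By the induction hypothesis $G'$ has a cycle $C''$ with $V(C)\subseteq V(C'')$. If $C''$ avoids the edge $v_kv_1$ it is already a cycle of $G$ and we are done; otherwise $C''-v_kv_1$ is a path $P$ of $G=G'-v_kv_1$ from $v_1$ to $v_k$ with $V(P)=V(C'')\supseteq V(C)$, and since $v_1v_k\notin E(G)$ and $d(v_1)+d(v_k)\ge n$, the Claim produces a cycle $C'$ of $G$ with $V(C)\subseteq V(P)\subseteq V(C')$.

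\medskip
The step I expect to be the crux is the case $A\cap B=\emptyset$ in the Claim. Ordinary Bondy--Chv\'atal rerouting only exploits neighbours of the two endpoints that already lie on the path, and when $P$ is far from spanning those on-path neighbourhoods need not overlap, so no cycle on $V(P)$ alone need exist. The decisive point is that the threshold is $n$ rather than the length of $P$: once the on-path contribution $|A|+|B|$ is too small, the surplus $d(u_1)+d(u_\ell)-(\ell-1)$ is forced off the path, and since only $n-\ell$ vertices lie off the path this surplus guarantees a common off-path neighbour $w$. Allowing $C'$ to be strictly larger than the $o$-cycle---the hypothesis of Lemma \ref{le2} asks only for $V(C)\subseteq V(C')$, not for a spanning cycle---is precisely what makes room for such a $w$, which is why the containment formulation is the natural one.
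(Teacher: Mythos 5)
The paper does not prove Lemma \ref{le2} at all: it is imported from \cite{Li_Ryjacek_Wang_Zhang}, so there is no internal proof to compare against, and your proposal must stand on its own. It does: the induction on the number of virtual pairs (pass to $G'=G+v_kv_1$, note $\widetilde{E}(G)\subseteq\widetilde{E}(G')$ since $|V(G')|=n$ and degrees only grow, then delete the added edge from the resulting cycle to get a path) is sound, and your crossing claim is the correct non-spanning analogue of Ore's argument --- the key point being exactly the one you flag, that when $A\cap B=\emptyset$ the bound $|A|+|B|\le \ell-1$ pushes at least $n-\ell+1$ endpoint-neighbours into the $n-\ell$ off-path vertices, forcing a common neighbour $w$ and a cycle on $V(P)\cup\{w\}$; this is essentially how the lemma is proved in the cited source. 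One tacit assumption deserves a sentence in a careful write-up: you read ``sequence of vertices'' in the definition of an $o$-cycle as a sequence of \emph{distinct} vertices (your base case ``$C$ is already a cycle'' and the count of exactly $m-1$ virtual pairs in $G'$ both use this). That is the intended reading, and it is necessary: if repetitions were allowed, the lemma would be false --- in $G=P_3=abc$ the closed walk $abab$ has all consecutive pairs in $E(G)\subseteq\widetilde{E}(G)$, yet $G$ contains no cycle at all.
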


\noindent{}
{\bf {Proof of Theorem \ref{th5}}}

By Lemma \ref{le1}, $G$ contains a heavy cycle. Let $C$ be a longest
heavy cycle of $G$, fixed an orientation. Suppose that $G$ is not
Hamiltonian. Since $G$ is 2-connected, there is a path of length at
least 2, internally-disjoint with $C$, that connects two vertices of
$C$. Let $P=w_0w_1\ldots w_rw_{r+1}$ be such a path with $r$ as
small as possible, where $w_0=u\in V(C)$ and $w_{r+1}=v\in V(C)$.

\setcounter{claim}{0}
\begin{claim}\label{cl1}
Let $x\in V(P)\backslash \{u,v\}$ and $y\in \{u^-,u^+,v^-,v^+\}$.
Then $xy\notin \widetilde{E}(G)$.
\end{claim}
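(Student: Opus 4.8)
The plan is to argue by contradiction against the maximality of $C$, turning a hypothetical membership in $\widetilde{E}(G)$ into an actual cycle that is longer than $C$ but still heavy, which Lemma \ref{le2} makes possible. Suppose to the contrary that $xy \in \widetilde{E}(G)$ for some $x \in V(P)\setminus\{u,v\}$, say $x = w_i$ with $1 \le i \le r$, and some $y \in \{u^-,u^+,v^-,v^+\}$. By reversing the orientation of $\overrightarrow{C}$ and/or interchanging the two endpoints $u = w_0$ and $v = w_{r+1}$ (the latter amounts to reading $P$ backwards), it suffices to treat the single case $y = u^+$.

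For this case I would exhibit an explicit o-cycle that contains every vertex of $C$ and at least one more. Since $w_0w_1, w_1w_2, \ldots, w_{i-1}w_i$ are edges of $P \subseteq G$, since $w_iu^+ \in \widetilde{E}(G)$ by assumption, and since the remaining links are edges of $C$, the closed sequence
\[
  D = u\,w_1\,w_2 \cdots w_i\,\overrightarrow{C}[u^+,u^-]\,u
\]
has each consecutive pair in $\widetilde{E}(G)$, so $D$ is an o-cycle of $G$. Its vertex set is exactly $V(C)\cup\{w_1,\ldots,w_i\}$, and as $w_1,\ldots,w_i$ are internal vertices of $P$ they lie off $C$; hence $D$ meets every vertex of $C$ together with the extra vertices $w_1,\ldots,w_i$.

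Finally, applying Lemma \ref{le2} to $D$ yields an actual cycle $C'$ of $G$ with $V(C)\cup\{w_1,\ldots,w_i\} \subseteq V(C')$. Because $V(C)\subseteq V(C')$, the cycle $C'$ contains all heavy vertices of $G$ and is therefore heavy; moreover $|V(C')| \ge |V(C)| + i > |V(C)|$, so $C'$ is strictly longer than $C$. This contradicts the choice of $C$ as a longest heavy cycle, and hence $xy\notin\widetilde{E}(G)$, as claimed.

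The substantive part is simply verifying that $D$ is a genuine o-cycle (every link in $\widetilde{E}(G)$) and that it is strictly longer than $C$, the latter being immediate from $w_i\notin V(C)$. I expect the only real chore to be the clean bookkeeping of the three symmetric cases $y\in\{u^-,v^+,v^-\}$, each obtained from the displayed $D$ by reversing $\overrightarrow{C}$ or by building the path-segment from $v$ instead of $u$; once $y=u^+$ is handled they follow verbatim. It is worth noting that this claim uses only the maximality of $C$, not the minimality of $r$.
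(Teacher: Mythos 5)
Your proposal is correct and follows essentially the same route as the paper: assume $xy\in\widetilde{E}(G)$, splice the subpath of $P$ from the endpoint of $C$ to $x$ together with the new $\widetilde{E}$-link and the rest of $C$ into an o-cycle covering $V(C)$ plus internal vertices of $P$, then invoke Lemma \ref{le2} to produce a longer heavy cycle, contradicting the maximality of $C$. The paper simply normalizes to $y=u^-$ where you normalize to $y=u^+$; the constructions are mirror images of each other.
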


\begin{proof}
Without loss of generality, assume that $y=u^-$. Suppose that $xy\in
\widetilde{E}(G)$. Then $C'=yxP[x,u]\overrightarrow{C}[u,y]$ is an
$o$-cycle containing all the vertices of $C$ and longer than $C$. By
Lemma \ref{le2}, there is a longer cycle containing all the vertices
in $C$, that is, a longer heavy cycle in $G$, a contradiction. The
other assertions can be proved similarly.
\end{proof}

\begin{claim}\label{cl2}
$u^-u^+\in \widetilde{E}(G)$, $v^-v^+\in \widetilde{E}(G)$.
\end{claim}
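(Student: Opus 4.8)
The plan is to prove $u^-u^+\in\widetilde{E}(G)$; the statement for $v^-v^+$ then follows by the symmetric argument, replacing $u$ by $v$ and the neighbour $w_1$ of $u$ on $P$ by the neighbour $w_r$ of $v$ on $P$. If $u^-u^+\in E(G)$ there is nothing to prove, since $E(G)\subseteq\widetilde{E}(G)$, so I would assume throughout that $u^-u^+\notin E(G)$ and aim instead to establish the stronger conclusion $d(u^-)+d(u^+)\geq n$.

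The first step is to exhibit an induced claw centred at $u$, the natural candidate being $\{u;u^-,u^+,w_1\}$. Its three outer edges $uu^-$, $uu^+$ (on $C$) and $uw_1$ (on $P$) are present, so I only need to verify that the remaining three pairs are non-adjacent: $u^-u^+$ is a non-edge by the standing assumption, while $u^-w_1$ and $u^+w_1$ are non-edges because Claim~\ref{cl1}, applied with $x=w_1\in V(P)\setminus\{u,v\}$ and $y\in\{u^-,u^+\}$, gives $w_1u^-,w_1u^+\notin\widetilde{E}(G)$, and in particular these are non-edges. Since $w_1$ lies off $C$ it is also distinct from the three cycle vertices, so this really is an induced $K_{1,3}$. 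Because $G$ is $K_{1,3}$-$f$-heavy, this claw is $f$-heavy, and as each of the three pairs of outer vertices is at distance $2$ inside the claw, I obtain that $\max\{d(u^-),d(u^+)\}$, $\max\{d(u^-),d(w_1)\}$ and $\max\{d(u^+),d(w_1)\}$ are all at least $n/2$.

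The crux, and the step I expect to need the most care, is that $f$-heaviness by itself yields only $\max\{d(u^-),d(u^+)\}\geq n/2$, which is far too weak to force $d(u^-)+d(u^+)\geq n$. To upgrade this I would bring in the two conditions involving $w_1$ together with the fact that $C$ is a \emph{heavy} cycle. Since $C$ contains every heavy vertex and $w_1$ is an internal vertex of $P$, hence not on $C$, the vertex $w_1$ must be light, i.e.\ $d(w_1)<n/2$. Feeding this into $\max\{d(u^-),d(w_1)\}\geq n/2$ and $\max\{d(u^+),d(w_1)\}\geq n/2$ forces $d(u^-)\geq n/2$ and $d(u^+)\geq n/2$, so both endpoints are heavy and $d(u^-)+d(u^+)\geq n$, giving $u^-u^+\in\widetilde{E}(G)$ as required. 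The whole argument hinges on this interplay: the claw condition supplies the distance-$2$ degree inequalities, while the heaviness of $C$ guarantees that the off-cycle vertex $w_1$ cannot be the heavy member of either pair, thereby pushing heaviness onto $u^-$ and $u^+$ simultaneously.
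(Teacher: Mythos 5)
Your proposal is correct and follows essentially the same route as the paper: exhibit the induced claw $\{u;u^-,u^+,w_1\}$ via Claim~\ref{cl1}, use the heaviness of $C$ to conclude that $w_1$ is light, and deduce that $u^-$ and $u^+$ are both heavy, hence $d(u^-)+d(u^+)\geq n$. The only cosmetic difference is that the paper invokes the equivalent ``2-heavy'' formulation of claw-$f$-heaviness, whereas you unpack the three distance-two inequalities of the $f$-heavy claw directly.
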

\begin{proof}
Suppose that $u^-u^+\notin E(G)$.  By Claim \ref{cl1}, $\{u,u^-,u^+,w_1\}$
induces a claw. By the choice of $C$, $w_1$ is light. Since $G$ is 2-heavy,
we have $d(u^-)+d(u^+)\geq n$. This implies that $u^-u^+\in \widetilde{E}(G)$.
Similarly, we can prove the other assertion.
\end{proof}

\begin{claim}\label{cl3}
$uv^{\pm}\notin \widetilde{E}(G)$, $vu^{\pm}\notin \widetilde{E}(G)$,
$u^-v^-\notin \widetilde{E}(G)$, $u^+v^+\notin \widetilde{E}(G)$.
\end{claim}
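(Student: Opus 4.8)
The plan is to extend the rotation-and-extension arguments used in Claims~\ref{cl1} and~\ref{cl2} from the immediate neighbours of $u$ and $v$ to the relevant crossing pairs, again exploiting that $C$ is a \emph{longest} heavy cycle and that $\widetilde{E}(G)$-edges can be absorbed into longer cycles via Lemma~\ref{le2}. For each of the four claimed non-membership statements I would argue by contradiction: assuming the putative $\widetilde{E}(G)$-edge exists, I construct an $o$-cycle that contains all vertices of $C$ but is strictly longer than $C$, which by Lemma~\ref{le2} yields a longer heavy cycle, contradicting the choice of $C$.

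Concretely, take the first case $uv^{+}\in\widetilde{E}(G)$. I would combine this edge with the internally-disjoint path $P$ from $u$ to $v$ and the two arcs of $\overrightarrow{C}$ to splice in the path: the $o$-cycle $u\,P[u,v]\,\overleftarrow{C}[v,v^{+}]\,u$ (taking the arc that runs backward from $v$ to $v^{+}$ so that $v^{+}$ becomes adjacent to $u$ in the new cycle) uses every vertex of $C$ together with the internal vertices $w_1,\dots,w_r$ of $P$, hence is longer than $C$; every consecutive pair lies in $\widetilde{E}(G)$ by construction, so it is a genuine $o$-cycle and Lemma~\ref{le2} finishes the contradiction. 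The symmetric cases $uv^{-}$, $vu^{\pm}$ are handled by the same splice with the roles of $u,v$ and of the two orientations interchanged. For the pairs $u^{-}v^{-}$ and $u^{+}v^{+}$ the construction is analogous but reroutes through both predecessors (resp.\ both successors): using $u^{-}v^{-}\in\widetilde{E}(G)$ together with $P$, one reverses one arc of $C$ to obtain an $o$-cycle that again captures all of $V(C)$ plus the interior of $P$.

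The step I expect to be the main obstacle is verifying, in each case, that the spliced sequence really is a cycle that contains \emph{all} of $V(C)$ and is strictly longer — i.e.\ that the orientations of the two $C$-arcs are chosen consistently so no vertex of $C$ is skipped and the interior vertices of $P$ are genuinely added. Getting the direction of the reversed arc right (so that the new adjacency demanded by the hypothetical $\widetilde{E}(G)$-edge closes up correctly) is exactly the delicate bookkeeping; this is the same mechanism as in Claim~\ref{cl1}, so I would lean on the phrase ``can be proved similarly'' only after spelling out one representative case in full. A minor supporting point is that Claim~\ref{cl2}, which places $u^{-}u^{+},v^{-}v^{+}$ in $\widetilde{E}(G)$, may be needed to certify that certain rerouted adjacencies across the gap are legitimate $\widetilde{E}(G)$-edges, so I would keep those facts available when closing the $u^{-}v^{-}$ and $u^{+}v^{+}$ cases.
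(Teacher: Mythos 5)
Your overall strategy --- argue by contradiction, build an $o$-cycle through all of $V(C)$ plus the interior of $P$, and invoke Lemma \ref{le2} against the maximality of $C$ --- is exactly the paper's. But the one case you work out in detail is constructed incorrectly, and the error is precisely the bookkeeping you yourself flagged as the main obstacle. The sequence $u\,P[u,v]\,\overleftarrow{C}[v,v^{+}]\,u$ is not a cycle: the arc $\overleftarrow{C}[v,v^{+}]$ runs backward from $v$ all the way around $C$ until it reaches the successor $v^{+}$ of $v$, so it traverses the \emph{entire} cycle $C$, including $u$; hence $u$ occurs twice in your sequence, it is not an $o$-cycle, and Lemma \ref{le2} does not apply. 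The only way to splice $P$ into $C$ while exploiting the hypothetical edge $uv^{+}$ is to excise $u$ from the $C$-traversal, and the tool for that is exactly Claim \ref{cl2}: the chord $u^{-}u^{+}\in\widetilde{E}(G)$ lets the new cycle jump over $u$. The paper's construction for the symmetric case $uv^{-}\in\widetilde{E}(G)$ is $uv^{-}\overleftarrow{C}[v^{-},u^{+}]\,u^{+}u^{-}\,\overleftarrow{C}[u^{-},v]\,P[v,u]$, which visits every vertex of $C$ exactly once and adds $w_1,\dots,w_r$.

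You also have the role of Claim \ref{cl2} inverted. It is indispensable for the cases $uv^{\pm},vu^{\pm}$ (for the reason above), but it is \emph{not} needed for $u^{-}v^{-}$ and $u^{+}v^{+}$: there the putative chord joins two vertices of $C$ neither of which is an endpoint of $P$, so one can take, e.g., $u^{-}v^{-}\,\overleftarrow{C}[v^{-},u]\,P[u,v]\,\overrightarrow{C}[v,u^{-}]$ --- the two arcs of $C$ are traversed in opposite senses, $P$ is inserted between $u$ and $v$, and no vertex repeats. So the skeleton of your argument matches the paper, but the representative case you chose to spell out does not close up into a cycle, and the auxiliary fact you deferred to the wrong subcases is the one needed to repair it.
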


\begin{proof}
Suppose that $uv^-\in \widetilde{E}(G)$. By Claim \ref{cl2},
$u^+u^-\in \widetilde{E}(G)$. Then
$C'=uv^-\overleftarrow{C}[v^-,u^+]u^+\break
u^-\overleftarrow{C}[u^-,v]P[v,u]$ is an $o$-cycle containing all
vertices in $C$ and longer than $C$, a contradiction. Suppose
$u^-v^-\in \widetilde{E}(G)$. Then
$C'=u^-v^-\overleftarrow{C}[v^-,u]P[u,v]\overrightarrow{C}[v,u^-]$
is an $o$-cycle containing all vertices in $C$ and longer than $C$,
a contradiction. The other assertions can be proved similarly.
\end{proof}

Let $y_1$ be the first vertex on $\overrightarrow{C}[u,v]$ such that
$uy_1\notin E(G)$, $y_2$ be the first vertex on
$\overrightarrow{C}[v,u]$ such that $vy_2\notin E(G)$. By Claim
\ref{cl3}, $uv^-\notin E(G)$ and $vu^-\notin E(G)$. Thus, $y_1$ and
$y_2$ are well-defined.
\begin{claim}\label{cl4}
Let $w\in \{w_1,\cdots,w_r\}, x\in \overrightarrow{C}[u^+,y_1]$
and $y\in \overrightarrow{C}[v^+,y_2]$. Then we have

(1) $wx\notin \widetilde{E}(G), wy\notin \widetilde{E}(G)$;

(2) $uy\notin \widetilde{E}(G), vx\notin \widetilde{E}(G)$;

(3) $xy\notin \widetilde{E}(G)$.
\end{claim}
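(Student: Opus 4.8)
The plan is to establish all three parts by the contradiction scheme already used for Claims \ref{cl1}--\ref{cl3}: assuming some listed pair lies in $\widetilde{E}(G)$, I will display an $o$-cycle $C'$ with $V(C)\subseteq V(C')$ and $|V(C')|>|V(C)|$, so that Lemma \ref{le2} produces a cycle through all of $V(C)$ that is longer than $C$, i.e.\ a longer heavy cycle, contradicting the choice of $C$. The ingredients at my disposal are the chords $u^-u^+,v^-v^+\in\widetilde{E}(G)$ from Claim \ref{cl2}, the path $P$, and the edges guaranteed by the definitions of $y_1,y_2$: every vertex strictly between $u$ and $y_1$ on $\overrightarrow{C}[u,v]$ is adjacent to $u$, and every vertex strictly between $v$ and $y_2$ on $\overrightarrow{C}[v,u]$ is adjacent to $v$; in particular $ux^-\in E(G)$ when $x\in\overrightarrow{C}[u^+,y_1]\setminus\{u^+\}$ and $vy^-\in E(G)$ when $y\in\overrightarrow{C}[v^+,y_2]\setminus\{v^+\}$. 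Each assertion then reduces to weaving these edges, the assumed new edge, and the two arcs $\overrightarrow{C}[u^+,v^-]$ and $\overrightarrow{C}[v^+,u^-]$ into one $o$-cycle. By the symmetry interchanging $u,v$ together with the orientation, it suffices to treat $wx$, $uy$ and $xy$.

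For (1), suppose $wx\in\widetilde{E}(G)$ with $w=w_j$ and $x\in\overrightarrow{C}[u^+,y_1]$. If $x=u^+$ this is Claim \ref{cl1}; otherwise $ux^-\in E(G)$ and
\[
C'=P[u,w_j]\,x\,\overrightarrow{C}[x,u^-]\,u^+\,\overrightarrow{C}[u^+,x^-]\,u
\]
is an $o$-cycle (its successive links are the subpath $P[u,w_j]$, the chord $w_jx$, an arc to $u^-$, the chord $u^-u^+$, an arc to $x^-$, and the edge $x^-u$) containing $V(C)\cup\{w_1,\dots,w_j\}$. For (2), suppose $uy\in\widetilde{E}(G)$ with $y\in\overrightarrow{C}[v^+,y_2]$; the case $y=v^+$ is Claim \ref{cl3}, and otherwise $vy^-\in E(G)$ and
\[
C'=P[u,v]\,y^-\,\overleftarrow{C}[y^-,v^+]\,v^-\,\overleftarrow{C}[v^-,u^+]\,u^-\,\overleftarrow{C}[u^-,y]\,u
\]
is an $o$-cycle (using in turn $P$, the edge $vy^-$, the chord $v^+v^-$, the chord $u^+u^-$, and the edge $yu$) containing $V(C)\cup\{w_1,\dots,w_r\}$.

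The real obstacle is (3), since here the assumed edge $xy$ joins the two arcs ``across'' the path $P$, so both endpoints $u,v$ are already spoken for by $P$ and a naive reroute always leaves one of the four sub-arcs $\overrightarrow{C}[u^+,x^-]$, $\overrightarrow{C}[x^+,v^-]$, $\overrightarrow{C}[v^+,y^-]$, $\overrightarrow{C}[y^+,u^-]$ uncovered or fails to close up. The key is to spend \emph{both} Claim \ref{cl2} chords together with $ux^-$ and $vy^-$: assuming $xy\in\widetilde{E}(G)$ with $x\in\overrightarrow{C}[u^+,y_1]\setminus\{u^+\}$ and $y\in\overrightarrow{C}[v^+,y_2]\setminus\{v^+\}$,
\[
C'=u\,x^-\,\overleftarrow{C}[x^-,u^+]\,u^-\,\overleftarrow{C}[u^-,y]\,x\,\overrightarrow{C}[x,v^-]\,v^+\,\overrightarrow{C}[v^+,y^-]\,v\,P[v,u]
\]
is an $o$-cycle whose successive links are $ux^-$, the arc back to $u^+$, the chord $u^+u^-$, the arc back to $y$, the chord $yx$, the arc forward to $v^-$, the chord $v^-v^+$, the arc forward to $y^-$, the edge $y^-v$ and finally $P$. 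One checks that the four sub-arcs together with $u,v,x,y$ and $w_1,\dots,w_r$ are each visited exactly once, so $V(C)\subsetneq V(C')$. The boundary cases $x=u^+$ or $y=v^+$ make $\overrightarrow{C}[u^+,x^-]$ or $\overrightarrow{C}[v^+,y^-]$ empty and are handled by analogous shorter $o$-cycles, while $x=u^+$ and $y=v^+$ simultaneously is impossible, since then $xy=u^+v^+\notin\widetilde{E}(G)$ by Claim \ref{cl3}. I expect the one delicate point to be verifying that this single $o$-cycle really does sweep up all four sub-arcs without repetition; the rest is the routine ``longer heavy cycle'' contradiction via Lemma \ref{le2}.
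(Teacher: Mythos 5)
Your proposal is correct and takes essentially the same approach as the paper: for each part you assume the offending pair lies in $\widetilde{E}(G)$, construct an $o$-cycle through $V(C)$ and the $w_i$'s (your cycles for (1), (2) and the generic case of (3) are the paper's cycles up to reversal and choice of starting vertex, using the same ingredients $u^-u^+,v^-v^+\in\widetilde{E}(G)$, $ux^-\in E(G)$ and $vy^-\in E(G)$), and invoke Lemma \ref{le2}. The only difference is that the paper writes out the two single-boundary subcases of (3) explicitly while you describe them as analogous shorter $o$-cycles, which is accurate, and you make explicit (via Claim \ref{cl3}) why $x=u^+$ and $y=v^+$ cannot occur together, a point the paper leaves implicit.
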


\begin{proof}
(1) Suppose that $wx\in \widetilde{E}(G)$. By Claim \ref{cl1},
$x\neq u^+$ and this implies that $ux^-\in E(G)$. Then
$C'=P[u,w]wx\overrightarrow{C}[x,u^-]u^-u^+\overrightarrow{C}[u^+,x^-]x^-u$
is an $o$-cycle longer than $C$ and contains all vertices in $C$, a
contradiction. The other assertion can be proved similarly.

(2) Suppose that $uy\in \widetilde{E}(G)$. By Claim \ref{cl3},
$y\neq v^+$ and this implies that $vy^-\in E(G)$. Then
$C'=uy\overrightarrow{C}[y,u^-]u^-u^+\overrightarrow{C}[u^+,v^-]v^-v^+\overrightarrow{C}[v^+,y^-]y^-vP[v,u]$
is an $o$-cycle longer than $C$ and contains all vertices in $C$, a
contradiction. The other assertion can be proved.

(3) Suppose that $xy\in \widetilde{E}(G)$. By Claim \ref{cl2},
$u^-u^+\in \widetilde{E}(G)$ and $v^-v^+\in \widetilde{E}(G)$. Now
$C'=P[u,v]vy^-\overleftarrow{C}[y^-,v^+]v^+v^-\overleftarrow{C}[v^-,x]xy\overrightarrow{C}[y,u^-]u^-u^+\overrightarrow{C}[u^+,x^-]x^-u$
(if $x\neq u^+$ and $y\neq v^+$) or
$C'=P[u,v]vy^-\overleftarrow{C}[y^-,v^+]v^+v^-\overleftarrow{C}[v^-,u^+]u^+y\overrightarrow{C}[y,u]$
(if $x=u^+$ and $y\neq v^+$) or
$C'=P[u,v]\overleftarrow{C}[v,x]xv^+C[v^+,u^-]u^-u^+C[u^+,x^-]x^-u$
(if $x\neq u^+$ and $y=v^+$) is an $o$-cycle longer than $C$ and
contains all vertices in $C$, a contradiction.
\end{proof}

\begin{claim}\label{c5}
$u^-u^+\in E(G)$ or $v^-v^+\in E(G)$.
\end{claim}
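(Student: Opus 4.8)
The plan is to argue by contradiction: I would suppose that \emph{both} $u^-u^+\notin E(G)$ and $v^-v^+\notin E(G)$, and then derive a contradiction with Claim~\ref{cl3}. The guiding observation is that the hypothesis $u^-u^+\notin E(G)$ is strong enough, through the claw-$f$-heavy condition, to force not merely $d(u^-)+d(u^+)\ge n$ but in fact that \emph{each} of $u^-$ and $u^+$ is individually heavy; the symmetric statement will hold at $v$.

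First I would recover the two induced claws already exploited in the proof of Claim~\ref{cl2}. Since $uw_1\in E(G)$ and $uu^-,uu^+\in E(G)$, while Claim~\ref{cl1} guarantees $w_1u^-,w_1u^+\notin E(G)$, and we are assuming $u^-u^+\notin E(G)$, the set $\{u,u^-,u^+,w_1\}$ induces a claw centered at $u$; here I use that $r\ge 1$, so $w_1$ is an interior vertex of $P$ and hence $w_1\neq u,v$. Because $C$ is a heavy cycle it contains every heavy vertex of $G$, and $w_1\notin V(C)$, so $w_1$ is light. Applying the claw-$f$-heavy (equivalently, $2$-heavy) hypothesis to the distance-two pairs $(u^-,w_1)$ and $(u^+,w_1)$ of end vertices, and using $d(w_1)<n/2$, I conclude $d(u^-)\ge n/2$ and $d(u^+)\ge n/2$, i.e.\ both $u^-$ and $u^+$ are heavy. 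Running the identical argument at $v$ with the induced claw $\{v,v^-,v^+,w_r\}$ (again $w_r$ is an interior vertex, hence light) shows that both $v^-$ and $v^+$ are heavy.

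With all four of $u^{\pm},v^{\pm}$ heavy, I would finish in a single line: $d(u^+)+d(v^+)\ge n/2+n/2=n$, so $u^+v^+\in\widetilde{E}(G)$, which directly contradicts Claim~\ref{cl3}. Therefore at least one of $u^-u^+$ and $v^-v^+$ is a genuine edge, as required.

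I do not anticipate a serious obstacle here; the step doing the real work is the passage from ``the degree sum is at least $n$'' to ``each endpoint is individually heavy,'' which is exactly what the $2$-heavy condition provides once the third end vertex $w_1$ (resp.\ $w_r$) is known to be light. The only points needing care are confirming that the two claws are genuinely induced---precisely the content of Claim~\ref{cl1}---and that the interior vertices $w_1,w_r$ lie off the heavy cycle $C$ and are thus light; both are immediate. Finally, I note that the same contradiction could equally be obtained from the pair $(u^-,v^-)$ together with $u^-v^-\notin\widetilde{E}(G)$ from Claim~\ref{cl3}, so the use of $u^+,v^+$ is merely a matter of convenience.
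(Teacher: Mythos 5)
Your proof is correct and takes essentially the same route as the paper: both arguments suppose $u^-u^+\notin E(G)$ and $v^-v^+\notin E(G)$, use the induced claws $\{u,u^-,u^+,w_1\}$ and $\{v,v^-,v^+,w_r\}$ with the light interior vertex, and contradict Claim~\ref{cl3} via $\widetilde{E}(G)$. The only (harmless) difference is that you apply the claw-$f$-heavy condition to the pairs involving $w_1$ (resp.\ $w_r$) to conclude that all four of $u^{\pm},v^{\pm}$ are individually heavy, whereas the paper records only the degree sums $d(u^-)+d(u^+)\geq n$ and $d(v^-)+d(v^+)\geq n$ from Claim~\ref{cl2} and then uses a pigeonhole step to obtain $d(u^-)+d(v^-)\geq n$ or $d(u^+)+d(v^+)\geq n$.
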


\begin{proof}
Suppose that $u^-u^+\notin E(G)$ and $v^-v^+\notin E(G)$. By Claim
\ref{cl2}, we have $d(u^-)+d(u^+)\geq n$ and $d(v^-)+d(v^+)\geq n$.
Thus, we obtain $d(u^-)+d(v^-)\geq n$ or $d(u^+)+d(v^+)\geq n$,
contradicting Claim \ref{cl3}.
\end{proof}

By Claim \ref{c5}, without loss of generality, we assume that $u_{-1}u_1\in E(G)$.

\begin{claim}
$uv\in E(G)$.
\end{claim}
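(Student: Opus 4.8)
The plan is to argue by contradiction: assume $uv\notin E(G)$ and manufacture, via the $f$-heavy hypotheses, a situation in which \emph{both} $u^{+}$ and $v^{+}$ are heavy (or both $u^{-}$ and $v^{-}$ are). This is impossible by Claim~\ref{cl3}, since $u^{+}v^{+}\notin\widetilde{E}(G)$ already forces $d(u^{+})+d(v^{+})<n$, so the two cannot both have degree at least $n/2$ (and symmetrically for $u^{-},v^{-}$). The one free gift I will use repeatedly is that every internal vertex $w_{i}$ of $P$ lies off the heavy cycle $C$ and is therefore \emph{light}.

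First I would extract two structural facts from the minimality of $r$: the path $P$ is induced (a chord $w_{i}w_{j}$ would give a shorter internally disjoint path), and $u$ (resp. $v$) has no neighbour among $w_{2},\dots,w_{r}$ (resp. $w_{1},\dots,w_{r-1}$). Feeding these into Claims~\ref{cl1}, \ref{cl3} and \ref{cl4}, I would verify that $R=u^{+}\,u\,w_{1}\cdots w_{r}\,v\,v^{+}$ is an \emph{induced} path on $r+4$ vertices: the cycle edges $u^{+}u$, $vv^{+}$ together with the edges of $P$ account for all its edges, while each possible chord is ruled out (for instance $w_{i}u^{+},w_{i}v^{+}\notin E(G)$ and the $xy$-type non-edges by Claim~\ref{cl4}; $u^{+}v,u^{+}v^{+},uv^{+}\notin E(G)$ by Claim~\ref{cl3}; and $uv\notin E(G)$ by assumption).

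The decisive dichotomy is on the length of $P$. If $r\ge 3$, then $R$ already contains induced copies of $P_{7}$: the seven consecutive vertices beginning $u^{+}\,u\,w_{1}$ put the light vertex $w_{1}$ at distance $2$ from $u^{+}$, so $P_{7}$-$f$-heaviness makes $u^{+}$ heavy, and the seven consecutive vertices ending in $w_{r}\,v\,v^{+}$ make $v^{+}$ heavy, contradicting Claim~\ref{cl3}. The hard part is the low case $r\in\{1,2\}$, where $R$ is too short to carry a $P_{7}$ around the pairs $(u^{+},w_{1})$ and $(w_{r},v^{+})$. Here I would prolong $R$ along $C$ beyond $u^{+}$ and $v^{+}$, using $y_{1},y_{2}$ to control the extension: when $u^{++}=y_{1}$ one has $uu^{++}\notin E(G)$ and Claim~\ref{cl4} supplies the remaining non-edges, so $u^{++}u^{+}u\,w_{1}\cdots v\,v^{+}$ is an induced $P_{7}$ forcing $u^{+}$ heavy, and dually on the $v$-side. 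In the residual subcases (where $uu^{++}\in E(G)$, i.e. $y_{1}$ lies farther along $C$, and correspondingly for $v$) I expect to invoke the triangle $u^{-}uu^{+}$ granted by Claim~\ref{c5} together with the pendant path $u\,w_{1}\cdots w_{r}\,v$ to exhibit an induced deer or hourglass and apply the relevant $f$-heavy hypothesis. Checking that these configurations are genuinely induced and identifying exactly which symmetric neighbour pair is driven heavy is the principal obstacle, and it is precisely here that the deer/hourglass conditions—rather than claw- and $P_{7}$-$f$-heaviness alone—are likely to be needed.
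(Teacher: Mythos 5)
There is a genuine gap, and you have correctly located it yourself: the cases $r\in\{1,2\}$. Your induced path $R=u^{+}\,u\,w_{1}\cdots w_{r}\,v\,v^{+}$ has only $r+4$ vertices, so for $r\le 2$ it carries no induced $P_{7}$, and your proposed repair only covers the special subcase $u^{++}=y_{1}$ (and dually $v^{++}=y_{2}$). When $uu^{++}\in E(G)$ the one-vertex prolongation is not induced, and the fallback you sketch does not go through: the deer requires a triangle with two pendant $P_{3}$'s attached at \emph{two distinct} triangle vertices, and the only natural triangle available, $u\,w_{1}\,v$, presupposes $uv\in E(G)$ --- the very statement being proved; the configuration $\{u^{-},u,u^{+},w_{1},v\}$ is an hourglass only once $uv,vw_{1}\in E(G)$, i.e.\ only after this claim and $r=1$ are known. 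Moreover the theorem's hypothesis is a disjunction ($D$-$f$-heavy \emph{or} $H$-$f$-heavy), so any argument leaning on one of these graphs must be run twice; the claim should be (and in the paper is) provable from $P_{7}$-$f$-heaviness alone.

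The missing idea is to extend not by successors of $u^{+}$ and $v^{+}$ but by the pairs $y_{1}^{-},y_{1}$ and $y_{2}^{-},y_{2}$ themselves, wherever they sit on $C$: by definition $uy_{1}^{-}\in E(G)$ and $uy_{1}\notin E(G)$, so $y_{1}\,y_{1}^{-}\,u$ is always a two-edge induced extension at $u$, and likewise at $v$. Claim \ref{cl4} then shows that $\{y_{1},y_{1}^{-},u,w_{1},\ldots,w_{r},v,y_{2}^{-},y_{2}\}$ induces a $P_{6+r}$ for \emph{every} $r\ge 1$, with no case split; $P_{7}$-$f$-heaviness applied to the distance-two pairs $(y_{1}^{-},w_{1})$ and $(y_{2}^{-},w_{r})$ makes $y_{1}^{-}$ and $y_{2}^{-}$ heavy, and $y_{1}^{-}y_{2}^{-}\in\widetilde{E}(G)$ contradicts Claim \ref{cl4}(3). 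Your argument for $r\ge 3$ is correct and is essentially a truncation of this (with the contradiction landing on Claim \ref{cl3} instead), but as written the proposal does not close the low-$r$ cases.
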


\begin{proof}
Suppose that $uv\notin E(G)$. By Claim \ref{cl4},
$\{y_1,y_1^-,u,w_1,\ldots,w_r,v,y_2^-,y_2\}$ induces a $P_{6+r}$,
where $r\geq 1$. Since $G$ is $P_7$-$f$-heavy, $G$ is also
$P_{6+r}$-$f$-heavy. By the choice of $C$, $w_1$ and $w_r$ are
light. It follows that $y_1^-$ and $y_2^-$ are heavy, and this
implies $y_1^-y_2^-\in \widetilde{E}(G)$, contradicting Claim
\ref{cl4} (3).
\end{proof}

\begin{claim}
$r=1$.
\end{claim}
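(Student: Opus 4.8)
The plan is to argue by contradiction, assuming $r\geq 2$, and to contradict Claim~\ref{cl3} by exhibiting two induced claws. I would first record two preliminary facts. (i) The vertices $u$ and $v$ are non-adjacent on $C$: if $v=u^{+}$ or $v=u^{-}$, then replacing the corresponding edge of $C$ by the path $P$ produces a cycle through $V(C)\cup\{w_1,\dots,w_r\}$, i.e.\ a heavy cycle longer than $C$, which is impossible. In particular $u^{-}$ and $v^{-}$ differ from both $u$ and $v$. (ii) Every internal vertex of $P$ is light: $w_1,\dots,w_r$ lie off $C$, and since $C$ is a heavy cycle it already contains all heavy vertices, so $d(w_i)<n/2$ for all $i$; in particular $w_1$ and $w_r$ are light.

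Next I would show that $\{u;\,v,u^{-},w_1\}$ is an induced claw with centre $u$. The three edges are clear: $uv\in E(G)$ by the claim just established, $uu^{-}\in E(G)$ since $u^{-}$ is the predecessor of $u$ on $C$, and $uw_1\in E(G)$ as the first edge of $P$. For the three leaves to be pairwise non-adjacent I use $vu^{-}\notin\widetilde{E}(G)$ from Claim~\ref{cl3}, $u^{-}w_1\notin\widetilde{E}(G)$ from Claim~\ref{cl1}, and $vw_1\notin E(G)$; the last holds because otherwise $uw_1v$ would be a path of length $2$ joining two vertices of $C$ and internally disjoint from $C$, contradicting the minimality of $r$ (this is exactly the place where $r\geq 2$ is needed). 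Since $G$ is $K_{1,3}$-$f$-heavy this claw is $f$-heavy, so at least two of $v,u^{-},w_1$ are heavy; as $w_1$ is light, both $v$ and $u^{-}$ are heavy.

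By the mirror-image construction at the far end of $P$, the set $\{v;\,u,v^{-},w_r\}$ is an induced claw (its edges $vu,vv^{-},vw_r$ and its non-edges $uv^{-}\notin\widetilde{E}(G)$, $v^{-}w_r\notin\widetilde{E}(G)$, $uw_r\notin E(G)$ being justified symmetrically, again with $uw_r\notin E(G)$ forced by the minimality of $r$). Its $f$-heaviness together with the lightness of $w_r$ forces $u$ and $v^{-}$ to be heavy. Now $u^{-}$ and $v^{-}$ are both heavy, so $d(u^{-})+d(v^{-})\geq n$ and hence $u^{-}v^{-}\in\widetilde{E}(G)$, contradicting Claim~\ref{cl3}. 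Therefore $r\geq 2$ cannot occur, and since $P$ has length at least $2$ we conclude $r=1$.

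The decisive idea---and the main obstacle to spot---is that once $uv\in E(G)$ is available, the chord $uv$ is precisely the extra edge at $u$ that turns $v,u^{-},w_1$ into three pairwise non-adjacent leaves of a claw; after that, everything rests on the $2$-heavy (claw-$f$-heavy) hypothesis alone, and neither $P_7$- nor $D/H$-$f$-heaviness enters here. The only genuinely delicate verifications are the non-edges $vw_1$ and $uw_r$: they fail when $r=1$ (there $w_1=w_r$ is adjacent to both $u$ and $v$), so the claws degenerate exactly at $r=1$, which is why the argument proves $r=1$ and nothing stronger.
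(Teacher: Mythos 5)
Your proof is correct and follows essentially the same route as the paper: assuming $r\geq 2$, you exhibit the two induced claws centred at $u$ and at $v$ (with leaves $\{v,u^{-},w_1\}$ and $\{u,v^{-},w_r\}$, the non-edges coming from Claims~\ref{cl1} and~\ref{cl3} and the minimality of $r$), use the lightness of $w_1,w_r$ and claw-$f$-heaviness to make $u^{-}$ and $v^{-}$ heavy, and contradict Claim~\ref{cl3} via $u^{-}v^{-}\in\widetilde{E}(G)$. Your explicit verification that $v\notin\{u^{-},u^{+}\}$ and that the internal vertices of $P$ are light is a small tidying of details the paper leaves implicit, not a different argument.
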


\begin{proof}
Suppose $r\geq 2$. Since $r\geq 2$ and by the choice of the path $P$, we have $w_1v\notin E(G)$ and $w_ru\notin E(G)$. By Claim \ref{cl3},
we obtain $uv^+\notin E(G)$ and $u^-v\notin E(G)$. By Claim
\ref{cl1}, $w_1u^-\notin E(G)$ and $w_rv^+\notin E(G)$. Thus each of
$\{u,w_1,u^-,v\}$ and $\{v,w_r,v^-,u\}$ induces a claw. Since each
of $\{w_1,w_r\}$ is light and $G$ is 2-heavy, $u^-$ and $v^-$ are
heavy. Hence $u^-v^-\in \widetilde{E}(G)$, which contradicts Claim
\ref{cl3}.
\end{proof}

Note that $G$ is $D$-$f$-heavy or $H$-$f$-heavy. If $G$ is
$D$-$f$-heavy, then by Claim \ref{cl4}, $\{y_1,y_1^-,u,\break
w_1,v,y_2^-,y_2\}$ induces a $D$. Since $w_1$ is light, $y_1^-$ and
$y_2^-$ are heavy. It follows that $y_1^-y_2^-\in \widetilde{E}(G)$,
which contradicts Claim \ref{cl4} (3). Now, we assume that $G$ is
$H$-$f$-heavy. By Claims \ref{cl1} and \ref{cl3},
$\{u^-,u,u^+,w_1,v\}$ induces an $H$. It follows that $u^-$ is
heavy. If $v^-v^+\in E(G)$, then by Claims \ref{cl1} and \ref{cl3},
$\{v^-,v,v^+,w_1,u\}$ induces an $H$. Similarly, we have $v^-$ is
heavy. If $v^-v^+\notin E(G)$, then $\{v^-,v,v^+,w_1\}$ induces a
claw. Since $w_1$ is light and $G$ is 2-heavy, we have $v^-$ is
heavy. In these two cases, we obtain $u^-v^-\in \widetilde{E}(G)$,
which contradicts Claim \ref{cl3}.

The proof is complete. \hfill $\Box$

\section{Concluding remarks}
In this note, we give a new sufficient condition for Hamiltonicity
of graphs by restricting Fan's condition to triples of induced
subgraphs of graphs.

In fact, the idea that one can guarantee Hamiltonicity of graph by
restricting Fan's condition to pairs of induced subgraphs dated from
Bedrossian, Chen and Schelp \cite{Bedrossian_Chen_Schelp}. Later,
Chen, Wei and Zhang \cite{Chen_Wei_Zhang_0,Chen_Wei_Zhang_5}, and
Li, Wei and Gao \cite{Li_Wei_Gao} got related results with this
similar idea. Note that Bedrossian \cite{Bedrossian} characterized
all pairs of forbidden subgraphs $\{R,S\}$ for Hamiltonicity of
2-connected graphs. Thus we can pose this problem: which two
connected graphs $R$ and $S$ other than $P_3$ imply that every
2-connected $\{R,S\}$-\emph{f}-heavy graph is Hamiltonian? Recently,
this problem has been completely solved in \cite{Ning_Zhang}.

Brousek \cite{Brousek} gave a complete characterization of triples
of connected graphs $\{K_{1,3},R,S\}$ such that a graph $G$ being
2-connected and $\{K_{1,3},R,S\}$-free is Hamiltonian. Thus we can
pose the following problem naturally.

\begin{problem}
To characterize all possible triples of connected graphs
$\{K_{1,3},R,S\}$ such that every 2-connected graph $G$ being
$\{K_{1,3},R,S\}$-$f$-heavy is Hamiltonian.
\end{problem}

\section*{Acknowledgement}
This work is supported by NSFC (No.~11271300) and the Doctorate
Foundation of Northwestern Polytechnical University (cx201326). The
author would like to express gratitude to the editors and reviewers,
whose invaluable suggestions have improved the presentation of this
work.

\end{document}